\newcommand\De{\Delta}
\DeclareMathOperator{\Alb}{Alb}
\newcommand{\CC}{\ensuremath{\mathbb{C}}}
\newcommand{\ZZ}{\ensuremath{\mathbb{Z}}}
\newcommand{\hol}{\ensuremath{\mathcal{O}}}
\newcommand{\PP}{\ensuremath{\mathbb{P}}}
\newcommand{\ra}{\ensuremath{\rightarrow}}
\def\eea{\end{eqnarray*}}
\def\bea{\begin{eqnarray*}}
\newcommand\dual{\mathrel{\raise3pt\hbox{$\underline{\mathrm{\thinspace d
\thinspace}}$}}}
\newcommand\QED{\ifhmode\unskip\nobreak\fi\quad {\rm Q.E.D.}} 
\newcommand\qe{\ifhmode\unskip\nobreak\fi\quad $\Box$}       
\def\BOX{\hfill\lower.5\baselineskip\hbox{$\Box$}}
\newtheorem{theo}{Theorem}[section]
\newtheorem{remarkk}[theo]{Remark}
\newenvironment{rem}{\begin{remarkk}\rm}{\end{remarkk}}
\newtheorem{defin}[theo]{Definition}
\newenvironment{definition}{\begin{defin}\rm}{\end{defin}}
\newtheorem{prop}[theo] {Proposition}
\newtheorem{cor}[theo]{Corollary}
\newtheorem{lemma}[theo]{Lemma}
\newtheorem{example}[theo]{Example}
\newtheorem{conj}[theo]{Conjecture}
\DeclareMathOperator{\Aut}{Aut}
\title[Bloch conjecture for Inoue surfaces ]{Bloch's conjecture for Inoue surfaces with $p_g=0$, $K^2 = 7$.}
\author[I. Bauer]{I. Bauer}
 \thanks{The present work took place in the realm of the DFG
Forschergruppe 790 "Classification of algebraic surfaces and
compact complex manifolds".}
\date{\today}
\begin{document}
\maketitle

\section*{Introduction} 
Let $S$ be a smooth projective complex surface and let 
$$
A_0(S) = \bigoplus_{i=0}^{\infty} A_0^i(S)
$$
be the group of rational equivalence classes of zero cycles on $S$. Then {\em Bloch's conjecture} asserts the following:

\begin{conj}[S. Bloch, \cite{bloch}]  
Let $S$ be a smooth surface with $p_g(S) =0$. Then the kernel $T(S)$ of the natural morphism
$$
A_0^0(S) \rightarrow \Alb(S)
$$
is trivial.
\end{conj}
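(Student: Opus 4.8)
The plan is to prove the vanishing of $T(S)$ by organising the argument according to the Kodaira dimension $\kappa(S)$, reducing in each range to the vanishing of the transcendental part of the Chow motive of $S$. The starting observation is that $p_g(S)=0$ forces $H^{2,0}(S)=H^{0,2}(S)=0$, so that $H^2(S,\QQ)$ is entirely algebraic: it coincides with $NS(S)\otimes\QQ$, and the transcendental cohomology $H^2_{\mathrm{tr}}(S)$ vanishes. Moreover, by Roitman's theorem the torsion of $A_0^0(S)$ maps isomorphically onto the torsion of $\Alb(S)$, so $T(S)$ is torsion-free and uniquely divisible; consequently it suffices to prove $T(S)\otimes\QQ=0$, and I may work throughout with $\QQ$-coefficients and with rational Chow motives.

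First I would dispose of the surfaces that are not of general type. If $\kappa(S)\le 1$, then the Enriques--Kodaira classification together with $p_g=0$ leaves only rational and ruled surfaces ($\kappa=-\infty$), Enriques and bielliptic surfaces ($\kappa=0$), and properly elliptic surfaces ($\kappa=1$); in each of these cases $T(S)=0$ is the theorem of Bloch, Kas and Lieberman. Concretely, for rational and ruled surfaces $A_0^0(S)$ is already represented by the Albanese variety via the Albanese map, while in the $\kappa=0$ and $\kappa=1$ cases one exploits the (quasi-)elliptic or quotient structure to show that $A_0^0(S)$ is supported on a curve. This settles the conjecture for all $S$ with $\kappa(S)\le 1$.

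The substance of the problem therefore lies in the case $\kappa(S)=2$, that is, $S$ of general type with $p_g=0$. Here the strategy is to produce a Chow--Künneth decomposition $\Delta_S=\sum_{i=0}^{4}\pi_i$ of the diagonal in the sense of Murre and to isolate the transcendental projector $\pi_2^{\mathrm{tr}}$, which cuts out a summand $t_2(S)$ of the motive whose cohomological realisation is exactly $H^2_{\mathrm{tr}}(S)$. Since $H^2_{\mathrm{tr}}(S)=0$, the assertion $T(S)=0$ becomes equivalent to $t_2(S)=0$ as a rational Chow motive. By the nilpotence philosophy of Kimura and O'Sullivan, a finite-dimensional motive whose cohomology vanishes is itself zero; hence Bloch's conjecture for such an $S$ would follow from the finite-dimensionality of $h(S)$, and conversely. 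In practice one establishes $t_2(S)=0$ geometrically, by exhibiting enough correspondences on $S$ — an automorphism group, a fibration, or a presentation of $S$ as a quotient of a product of curves or of a surface with already known finite-dimensional motive — from which a Bloch--Srinivas decomposition of the diagonal supported on a curve can be extracted.

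The hard part will be precisely this general-type case, which admits no uniform solution. For a surface of general type with $p_g=0$ there is in general no a priori reason for $A_0^0(S)$ to be supported on a curve, and the equivalence above does not by itself resolve the matter, since the finite-dimensionality of $h(S)$ is itself open at this level of generality. Consequently the argument must descend to the explicit geometry of $S$ in order to construct the correspondences that annihilate $t_2(S)$, and it is the construction and control of these correspondences, carried out family by family, that constitutes the main obstacle and the reason the conjecture remains open in full generality. In the situation of the present paper this programme is realised for the Inoue surfaces with $K^2=7$: their description as bidouble covers carrying a suitable group of automorphisms is exactly the geometric input needed to produce the requisite decomposition of the diagonal and thereby force $T(S)=0$ for this class.
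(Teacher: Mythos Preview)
The statement you are addressing is Bloch's \emph{conjecture}; it is stated in the paper as a conjecture, not as a theorem, and the paper makes no attempt to prove it in general. There is therefore no ``paper's own proof'' against which to compare your proposal. What the paper actually proves is Theorem~\ref{main}: the special case of the conjecture for Inoue surfaces with $p_g=0$ and $K_S^2=7$.

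Your write-up is a fair survey of what is known and of the motivic reformulation, and you yourself acknowledge the essential point: for surfaces of general type the problem ``admits no uniform solution'' and ``remains open in full generality''. That acknowledgement is correct, but it also means your text is not a proof. The reduction to $t_2(S)=0$ via a Chow--K\"unneth decomposition, and the observation that Kimura finite-dimensionality of $h(S)$ would suffice, are standard and true, but finite-dimensionality is itself open for an arbitrary surface of general type with $p_g=0$, so nothing has been established beyond the Bloch--Kas--Lieberman range $\kappa\le 1$.

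If your intention was to sketch the strategy that the paper then specialises to the Inoue case, note that the paper does \emph{not} proceed via Chow--K\"unneth projectors or Kimura finite-dimensionality. It uses the much more elementary ``enough automorphisms'' criterion of Inose--Mizukami as sharpened by Barlow (Lemma~\ref{autos}): for the $(\ZZ/2\ZZ)^2$-action on an Inoue surface one checks an identity in the group algebra $\CC G$ and reduces to showing $T(S/\sigma)=0$ for each of the three involutions $\sigma$. The latter is obtained by proving that each quotient has Kodaira dimension at most $1$ (Proposition~\ref{quotients} combined with Proposition~\ref{kforinoue}), whereupon Bloch--Kas--Lieberman applies. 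So even restricted to the Inoue family, your outlined route differs from the paper's.
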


The conjecture has been proven for surfaces $S$ with $\kappa(S) <2$ by Bloch, Kas and Liebermann (cf. \cite{bkl}), and has been verified for several examples (cf. e.g. \cite{barlow}, \cite{coughlanchan}, \cite{inose}, \cite{keum}, \cite{voisin}). It has been observed recently that by a beautiful result due to S. Kimura (cf. \cite{kimura})  all product quotient surfaces (i.e., minimal models of $C_1 \times C_2 /G$, where $G$ is a finite group acting on the product of two curves of respective genera at least 2) with $p_g=0$ satisfy Bloch's conjecture (cf. \cite{4names}).

Even if nowadays a substantial number of examples are known to fulfill Bloch's conjecture, there is still no idea how to prove the result in general. Also worth mentioning is that to our knowledge Bloch's conjecture has not been verified for any fake projective plane, i.e., a surface of general type with $p_g= 0$ and $K_S^2=9$.

The main result of this note is to verify Bloch's conjecture for Inoue surfaces with $p_g=0$ and $K^2 = 7$. Inoue surfaces are up to now\footnote{In the meantime a new family of surfaces of general type with $K_S^2 =7$, $p_g=0$ has been constructed by Yifan Chen, cf. \cite{yifan}} the only known family with $p_g=0$ and $K_S^2 =7$. They form a four dimensional irreducible connected component $\mathfrak{N}_I$ in the Gieseker moduli space $\mathfrak{M}_{1,7}^{can}$ of  canonical models of surfaces of general type with $p_g=0$, $K^2=7$, as was shown among other things in \cite{bacainoue}.

These surfaces were first constructed by M. Inoue in \cite{inoue} as as quotients of  complete intersections of two divisors (explicitly given by equations) of respective multi-degrees $(2,2,2,0)$ and $(0,0,2,2)$ by a free $(\ZZ / 2 \ZZ)^5$-action.

They can also be described as bidouble covers of the 
four nodal cubic surface (cf.  \cite{mlp}). This description will be crucial for the proof of our main result.

\begin{theo}\label{main}
Let $S$ be an Inoue surface with $K_S^2 =7$ and $p_g=0$. Then
$$
T(S) = A_0^0(S) =0,
$$
i.e., $S$ satisfies Bloch's conjecture.
\end{theo}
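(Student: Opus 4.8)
The plan is to reduce Bloch's conjecture for $S$ to the finite-dimensionality of its Chow motive and then to establish the latter by exploiting the bidouble cover structure. First I would record the standard reduction. Since $S$ is of general type with $p_g(S)=0$, minimality gives $\chi(\mathcal{O}_S)=1$ and hence $q(S)=0$, so that $\Alb(S)=0$ and $A_0^0(S)=T(S)$; moreover $H^2(S,\QQ)$ is purely of type $(1,1)$, whence by the Lefschetz $(1,1)$-theorem every class in $H^2(S,\QQ)$ is algebraic and the transcendental cohomology $H^2_{\mathrm{tr}}(S)$ vanishes. Working with $\QQ$-coefficients, the Chow motive $h(S)$ carries its transcendental summand $t_2(S)$, cut out by a projector whose cohomological realization is $H^2_{\mathrm{tr}}(S)=0$. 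By Kimura's nilpotence theorem (cf. \cite{kimura}), a numerically trivial endomorphism of a finite-dimensional motive is nilpotent; applied to the identity endomorphism of $t_2(S)$ (which is numerically trivial, since its realization acts on the zero space), this forces that idempotent to be nilpotent, hence zero, so $t_2(S)=0$. As $t_2(S)=0$ is equivalent to $T(S)=0$, it therefore suffices to prove that $h(S)_\QQ$ is finite-dimensional in the sense of Kimura--O'Sullivan.

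Next I would use the description of $S$ as a bidouble cover $\pi\colon S\to \hat X$ of a minimal resolution $\hat X$ of the four-nodal cubic surface (cf. \cite{mlp}), with Galois group $G=(\ZZ/2\ZZ)^2$. The $G$-action splits the motive into its isotypic pieces,
$$
h(S)_\QQ=\bigoplus_{\chi\in G^\vee} h(S)_\chi .
$$
The invariant summand is $h(S)_{\mathrm{triv}}=h(\hat X)_\QQ$, the motive of a rational surface, which is a sum of Tate motives and in particular finite-dimensional. Each of the three non-trivial characters $\chi$ has kernel an order-two subgroup $H_\chi\subset G$ and corresponds to an intermediate double cover $Z_\chi=S/H_\chi\to \hat X$; since $h(Z_\chi)_\QQ=h(\hat X)_\QQ\oplus h(S)_\chi$, the piece $h(S)_\chi$ is a direct summand of $h(Z_\chi)_\QQ$. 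Because finite-dimensional motives are stable under direct sums and direct summands, the problem is reduced to proving that each of the three surfaces $Z_\chi$ has finite-dimensional motive.

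The geometric heart of the argument, and the step I expect to be the main obstacle, is the analysis of these three intermediate double covers. Using the explicit branch divisors of the Inoue bidouble cover on the four-nodal cubic (the unions of lines and conics cut out on $\hat X$), I would determine the birational type of each $Z_\chi$ and show that it is a surface of \emph{abelian type}, i.e.\ its motive lies in the tensor subcategory generated by motives of curves, where finite-dimensionality is known. Concretely I would aim to exhibit each $Z_\chi$ either as a rational surface, or as a surface of Kodaira dimension $\le 1$ carrying a fibration over a curve whose smooth fibres are curves, so that $h(Z_\chi)_\QQ$ is finite-dimensional; in the Kodaira dimension $<2$ cases the triviality of the Albanese kernel is moreover already guaranteed by Bloch--Kas--Liebermann \cite{bkl}. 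The delicate points are to compute the invariants $p_g,q$ of the $Z_\chi$ correctly in the presence of the exceptional curves of $\hat X$ over the four nodes, and to realize the relevant transcendental cohomology by curves; these are precisely what force one to use the concrete equations of the cover rather than purely formal properties.

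Once finite-dimensionality of every $h(Z_\chi)_\QQ$, and hence of $h(S)_\QQ=\bigoplus_{\chi} h(S)_\chi$, is established, the reduction of the first paragraph yields $t_2(S)=0$ and therefore $T(S)=A_0^0(S)=0$, proving the theorem.
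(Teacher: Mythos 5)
Your first two paragraphs are a correct formal reduction, and they are in substance the motivic translation of what the paper itself does: where you decompose $h(S)_\QQ$ into $G$-isotypic summands and invoke Kimura's nilpotence theorem \cite{kimura}, the paper uses the more elementary ``enough automorphisms'' criterion of Inose--Mizukami and Barlow (Lemma \ref{autos}, a computation in the group ring $\CC G$) to prove Proposition \ref{criterion1}: $T(S)=0$ if and only if $T(S/\langle\gamma_i\rangle)=0$ for the three involutions $\gamma_i\in G$. Both routes land in exactly the same place: everything hinges on showing that the three intermediate quotients satisfy Bloch's conjecture, which --- for surfaces with $p_g=0$ --- both you and the paper propose to deduce from Kodaira dimension $\le 1$ via \cite{bkl}.

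The genuine gap is that you never prove this, and it is the entire geometric content of the theorem; your third paragraph only announces the intention (``I would determine \dots I would aim to exhibit \dots''). Moreover, the strategy you sketch --- identifying the birational type of each $Z_\chi$ directly from the branch divisors --- is harder than necessary, and a priori it could fail: nothing formal prevents the quotient of a $p_g=0$, $K^2=7$ surface by an involution from being of general type (cf.\ the table in \cite{yongnam}). The paper's argument is indirect and numerical. It first computes the number $k_i$ of isolated fixed points of $\gamma_i$ from the Mendes Lopes--Pardini decomposition of the bicanonical system (Proposition \ref{charbican}, Corollary \ref{dimchar}, Proposition \ref{kforinoue}): $k_1=11$, $k_2=k_3=9$. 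It then shows (Proposition \ref{quotients}) that for \emph{any} involution on a minimal surface of general type with $p_g=0$, $K_S^2=7$, a quotient of Kodaira dimension $2$ forces $k=5$ or $k=7$; the proof uses $\rho(S)=3$, the possible traces of $\gamma_i$ on $H^2(S,\CC)$, and the Dolgachev--Mendes Lopes--Pardini bound on disjoint nodal curves on a surface with $\kappa\ge 0$ (Proposition \ref{rho}, Lemma \ref{k=r-3}). Since $11,9\notin\{5,7\}$, all three quotients have $\kappa\le 1$ and \cite{bkl} applies. Some substitute for this fixed-point computation is indispensable before your proposal becomes a proof.
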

 The proof will in fact use the method of "enough automorphisms" introduced by Inose and Mizukami (cf. \cite{inose}) and refined by Barlow (cf. \cite{barlow}), but in a much simplified form.
\begin{rem}
As shown in \cite{bacainoue}, if $[S] \in \mathfrak{N}_I$, then $S$ is an Inoue surface. Therefore our result shows Bloch's conjecture for {\em each  surface} in the irreducible connected component $\mathfrak{N}_I$. 
\end{rem}

\section{Bloch's conjecture for surfaces with a $(\ZZ / 2 \ZZ)^2$-action}

The aim of this note is to prove Bloch's conjecture for Inoue surfaces using the method of "enough automorphisms" introduced by Inose and Mizukami (cf. \cite{inose}), refined by Barlow (cf. \cite{barlow}).

We need the following notation.

\begin{defin}
Let $G$ be a finite group and $H \leq G$ be a subgroup. Then we set:
$$
z(H) := \sum_{h \in H} h \in \CC G.
$$
\end{defin}

We recall Barlow's reformulation of the criterion of Inose and Mizukami in \cite{inose}.

\begin{lemma}\label{autos}[Precise version of Inose's "enough automorphisms", \cite{barlow}]
Let $S$ be a nonsingular surface and $G$ a finite subgroup of $\Aut(S)$. Let $H$, $H_1, \ldots , H_r$ be subgroups of $G$. We denote by $\mathcal{I}$ the two-sided ideal of $\CC G$ generated by $z(H_1), \ldots , z(H_r)$. Assume that
\begin{enumerate}
\item $z(H) \in I$,
\item $T(S/H_i) = 0, \ \forall i \in \{1, \ldots \}$.
\end{enumerate}
Then $T(S/H) = 0$.
\end{lemma}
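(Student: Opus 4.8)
The plan is to turn the geometric statement into a purely ring-theoretic one about the action of the group algebra $\CC G$ on the $\CC$-vector space $V := T(S) \otimes_{\ZZ} \CC$. Since each $g \in G \subseteq \Aut(S)$ acts on $A_0^0(S)$ compatibly with its action on $\Alb(S)$, it preserves the Albanese kernel, so $T(S)$ is a $\ZZ G$-module and $V$ is a $\CC G$-module. The key point is that for every subgroup $K \le G$ the vanishing of $T(S/K)$ is equivalent to $z(K)$ annihilating $V$; granting this, the lemma becomes a one-line ideal inclusion.

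First I would record the link between quotients and invariants. For a subgroup $K \le G$ let $\pi\colon S \to S/K$ be the quotient map. The quotient $S/K$ has only rational (quotient) singularities, so $T(S/K)$ is computed on any resolution, and the projection formula gives $\pi_* \pi^* = |K| \cdot \id$ and $\pi^* \pi_* = \sum_{k \in K} k_* = z(K)$ on zero-cycles. Tensoring with $\QQ$, the element $e_K := \frac{1}{|K|} z(K) \in \CC G$ is the averaging idempotent of $K$, and $\pi^*$ identifies $T(S/K) \otimes \QQ$ with the invariant subspace $(T(S)\otimes\QQ)^K = e_K\,(T(S)\otimes\QQ)$. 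By Roitman's theorem the Albanese kernel of a smooth projective surface is torsion-free, so such a group vanishes if and only if it vanishes after $\otimes\,\CC$; applying this to a smooth model of $S/K$ gives the desired equivalence
\[
T(S/K) = 0 \iff V^K = 0 \iff z(K)\, V = 0,
\]
the last step because $z(K) = |K|\, e_K$ and $|K|$ is invertible in $\CC$.

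With this in hand the lemma is immediate. Hypothesis (2) says $T(S/H_i) = 0$ for each $i$, hence $z(H_i)\, V = 0$, i.e. $z(H_i) \in \mathrm{Ann}_{\CC G}(V)$. The annihilator of a module is a two-sided ideal of $\CC G$, so it contains the two-sided ideal $\mathcal{I}$ generated by $z(H_1), \ldots, z(H_r)$. By hypothesis (1) we then have $z(H) \in \mathcal{I} \subseteq \mathrm{Ann}_{\CC G}(V)$, whence $z(H)\, V = 0$, i.e. $V^H = 0$. Applying the displayed equivalence with $K = H$ yields $T(S/H) = 0$.

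The conceptual content is thus very light; the two places that require care are exactly the two inputs to the displayed equivalence. The first is the passage between integral and complex coefficients, which rests on Roitman's torsion theorem and is what makes the condition $T=0$ detectable on the single $\CC G$-module $V$. The second, and the only genuinely geometric obstacle, is justifying the correspondence identities $\pi^*\pi_* = z(K)$ and $T(S/K)\otimes\QQ \iso (T(S)\otimes\QQ)^K$ despite $S/K$ being singular; here one uses that quotient singularities are rational, so that $A_0$ is unchanged under resolution and the transfer argument survives with $\QQ$-coefficients. I would also stress that verifying hypothesis (1), namely that $z(H)$ genuinely lies in $\mathcal{I}$, is a separate computation in $\CC G$ performed for the specific subgroups arising for Inoue surfaces, and is not part of proving this lemma.
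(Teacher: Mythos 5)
The paper does not prove this lemma at all: it is quoted verbatim from Barlow \cite{barlow}, so there is no internal proof to compare against. Your argument is correct and is essentially a reconstruction of Barlow's original proof: the transfer identities $\pi_*\pi^* = |K|\cdot\id$, $\pi^*\pi_* = z(K)$ giving $T(S/K)\otimes\QQ \cong (T(S)\otimes\QQ)^K$ (with rationality of quotient singularities handling the singular quotient), Roitman's torsion theorem to pass between $T=0$ and $T\otimes\QQ=0$, and the observation that $\mathrm{Ann}_{\CC G}(V)$ is a two-sided ideal, so that hypotheses (1) and (2) force $z(H)V=0$.
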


Using the above we can show the following
\begin{prop}\label{criterion1}
Let $S$ be a surface of general type with $p_g(S) =0$. Assume that $G = (\ZZ /2 \ZZ)^2  \leq \Aut(S)$. Then $S$ satisfies Bloch's conjecture if and only if for each $\sigma \in G \setminus \{0\}$ the quotient $S/ \sigma$ satisfies Bloch's conjecture.
\end{prop}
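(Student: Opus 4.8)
The plan is to apply Lemma \ref{autos} with the trivial subgroup in the role of $H$, so that $S/H=S$ and hypothesis (2) of that lemma becomes precisely the assumption on the quotients by involutions. First I would fix notation. Write the three nonzero elements of $G=(\ZZ/2\ZZ)^2$ as $\sigma_1,\sigma_2,\sigma_3$, ordered so that $\sigma_1\sigma_2=\sigma_3$, and set $H_i:=\langle\sigma_i\rangle$; these are exactly the three order-two subgroups, and $z(H_i)=1+\sigma_i\in\CC G$. I would also record that since $S$ is of general type with $p_g=0$ one has $q(S)=0$, whence $\Alb(S)=0$ and $T(S)=A_0^0(S)$; the same holds for each $S/\sigma$ because $p_g$ and $q$ can only drop on passing to invariants, so "Bloch's conjecture for $S/\sigma$" is literally the statement $T(S/\sigma)=0$.

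For the \emph{only if} direction I would argue directly by functoriality of zero cycles. Assume $T(S)=A_0^0(S)=0$, and for a nonzero $\sigma\in G$ let $\pi\colon S\to X:=S/\sigma$ be the quotient morphism, which is finite and surjective. Pushforward $\pi_*\colon A_0(S)\to A_0(X)$ is surjective, since every closed point of $X$ is the image of a closed point of $S$ and residue-field degrees are $1$ over $\CC$; as $\pi_*$ preserves the degree of a zero cycle, it restricts to a surjection $A_0^0(S)\onto A_0^0(X)$. Hence $A_0^0(X)=0$, i.e. $S/\sigma$ satisfies Bloch's conjecture.

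For the \emph{if} direction the heart of the matter is an elementary identity in the group algebra. Since $z(H_1)z(H_2)=(1+\sigma_1)(1+\sigma_2)=1+\sigma_1+\sigma_2+\sigma_3=z(G)$, one computes
\[
z(H_1)+z(H_2)+z(H_3)-z(H_1)z(H_2)=\bigl(3+\sigma_1+\sigma_2+\sigma_3\bigr)-\bigl(1+\sigma_1+\sigma_2+\sigma_3\bigr)=2\cdot 1 .
\]
Dividing by $2$, which is legitimate over $\CC$, shows that $1=z(\{0\})$ lies in the two-sided ideal $\mathcal{I}$ generated by $z(H_1),z(H_2),z(H_3)$, so hypothesis (1) of Lemma \ref{autos} holds. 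By assumption $T(S/H_i)=0$ for $i=1,2,3$, which is hypothesis (2). Applying the lemma with $H=\{0\}$ then yields $T(S)=T(S/\{0\})=0$, i.e. $A_0^0(S)=0$.

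Given Lemma \ref{autos}, the only genuine content is this group-algebra computation exhibiting $1\in\mathcal{I}$; verifying the remaining hypotheses (smoothness of $S$, and $p_g=0=q$ for the quotients) is routine. The one technical point I would keep honest is that the quotients $S/\sigma$ are a priori singular, with nodes at the isolated fixed points of $\sigma$; since these are rational double points, passing to the minimal resolution does not alter $A_0$, so the symbols $T(S/\sigma)$ and the surjectivity argument above are unaffected, and the criterion holds as stated.
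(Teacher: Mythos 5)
Your proof is correct and follows essentially the same route as the paper: both apply Lemma \ref{autos} with $H=\{0\}$ and the three cyclic subgroups $H_i=\langle\sigma_i\rangle$, and your explicit identity $z(H_1)+z(H_2)+z(H_3)-z(H_1)z(H_2)=2\cdot 1$ is just an unwound version of the paper's congruence argument ($\sigma_i\equiv -1$ and $\sigma_3=\sigma_1\sigma_2\equiv 1 \bmod \mathcal{I}$, so $2\in\mathcal{I}$). Your extra care on the easy direction (pushforward surjectivity) and on the nodal quotients matches the paper's surrounding remark and is fine.
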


\begin{rem}
Note that $S/\sigma$ is a surface with at most nodes and denoting by $X_{\sigma} \rightarrow S/\sigma$ its resolution of singularities, $X_{\sigma}$ is minimal and has $p_g=0$. Moreover, since nodes are rational singularities, $T(S/\sigma) = T(X_{\sigma})$. 
\end{rem}

\begin{proof}
If $S$ satisfies Bloch's conjecture then obviously each quotient by an involution also.

For the other direction we apply lemma \ref{autos} for $G= (\ZZ /2\ZZ)^2$, $H=0$, $H_1 = \langle \gamma_1 \rangle$, $H_2 = \langle \gamma_2 \rangle$, $H_3 = \langle \gamma_3 \rangle$, where $\gamma_1, \gamma_2, \gamma_3 = \gamma_1 \gamma_2$ are the three non trivial elements of $G$.
Then by assumption $S/H_i$ satisfies Bloch's conjecture, i.e. $T(S/H_i) = T(X) = 0$. 

Therefore it remains to verify that $1 = z(H) \in \mathcal{I}$, where $\mathcal{I}$ is the ideal in $\CC G$ generated by $z(H_1), z(H_2), z(H_3)$.
Observe that $z(H_i) = 1 + (\gamma_i)_*$, i.e. $\gamma_i \equiv -1 \mod  \mathcal{I}$. On the other hand, $(\gamma_3)_* = (\gamma_1)_*(\gamma_2)_* \equiv 1 \mod \mathcal{I}$, whence $1 = z(H) \in \mathcal{I}$.

\end{proof}

\begin{cor}\label{criterion}
Let $S$ be a surface of general type with $p_g(S) =0$ and assume that $G = (\ZZ /2 \ZZ)^2  \leq \Aut(S)$. Assume that for each $\sigma \in G \setminus \{0\}$ the quotient $S/ \sigma$ has $\kappa(S/ \sigma) \leq 1$, then $S$ satisfies Bloch's conjecture, i.e., $T(S) = A_0^0(S) = 0$. 
\end{cor}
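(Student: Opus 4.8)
The plan is to derive Corollary~\ref{criterion} directly from Proposition~\ref{criterion1} by showing that the hypothesis on Kodaira dimension forces each quotient to satisfy Bloch's conjecture, which is exactly the condition Proposition~\ref{criterion1} needs. By Proposition~\ref{criterion1}, it suffices to verify that for every nontrivial $\sigma \in G$ the quotient surface $S/\sigma$ satisfies Bloch's conjecture, i.e.\ $T(S/\sigma) = 0$; once this is established for all three involutions, the proposition yields $T(S) = A_0^0(S) = 0$ immediately.

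First I would invoke the remark following Proposition~\ref{criterion1}: for each involution $\sigma$, the quotient $S/\sigma$ has at worst nodes, and if $X_\sigma \to S/\sigma$ denotes its minimal resolution, then $X_\sigma$ is a smooth minimal surface with $p_g = 0$, and since nodes are rational singularities one has $T(S/\sigma) = T(X_\sigma)$. Thus the problem of verifying Bloch's conjecture for $S/\sigma$ is transferred to the smooth resolution $X_\sigma$. I would also note that resolving nodes does not change the Kodaira dimension, so $\kappa(X_\sigma) = \kappa(S/\sigma) \le 1$ by hypothesis.

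Next I would apply the theorem of Bloch, Kas and Liebermann (\cite{bkl}) cited in the introduction, which proves Bloch's conjecture for all smooth surfaces of Kodaira dimension strictly less than $2$. Since $X_\sigma$ is smooth with $p_g = 0$ and $\kappa(X_\sigma) \le 1$, that result applies and gives $T(X_\sigma) = 0$. Combining with the identification $T(S/\sigma) = T(X_\sigma)$ from the previous step, we conclude $T(S/\sigma) = 0$ for each nontrivial $\sigma \in G$.

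Finally, having verified the hypothesis of Proposition~\ref{criterion1} — namely that $S/\sigma$ satisfies Bloch's conjecture for every $\sigma \in G \setminus \{0\}$ — I would apply that proposition to conclude that $S$ itself satisfies Bloch's conjecture, i.e.\ $T(S) = A_0^0(S) = 0$. The argument is essentially a routine chain of reductions, so there is no serious obstacle; the only point demanding care is the bookkeeping around the singular quotient, specifically checking that passing to the minimal resolution preserves both $p_g = 0$ and the bound on Kodaira dimension, and that the rationality of the nodes legitimately identifies $T(S/\sigma)$ with $T(X_\sigma)$ so that the Bloch--Kas--Liebermann theorem for smooth surfaces can be brought to bear.
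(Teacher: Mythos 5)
Your proposal is correct and follows essentially the same route as the paper: the paper's own proof is the one-line observation that the corollary "follows immediately combining Proposition~\ref{criterion1} and \cite{bkl}," and your argument simply spells out this combination, including the bookkeeping from the remark after Proposition~\ref{criterion1} (nodes are rational, so $T(S/\sigma) = T(X_\sigma)$, and the resolution keeps $p_g = 0$ and $\kappa \le 1$) that legitimizes applying the Bloch--Kas--Liebermann theorem to each quotient.
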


\begin{proof}
Follows immediately combining proposition \ref{criterion1} and  \cite{bkl}.
\end{proof}
\section{Inoue surfaces with $p_g=0$ and 
$K_S^2 = 7$ as bidouble covers of the four nodal cubic}

In \cite{inoue} the author constructs 
 a family of 
minimal surfaces of general type $S$ with $p_g= 0$, 
$K_S^2
= 7$ as  quotients of a complete intersection of two divisors (explicitly given by equations) of respective multi-degrees $(2,2,2,0)$ and $(0,0,2,2)$ by a free $(\ZZ / 2 \ZZ)^5$-action.

In order to prove our main result we use a  different description of the Inoue surfaces, given by Mendes Lopes and Pardini in \cite{mlp},  as $(\ZZ / 2 \ZZ)^2$ - Galois covers of the 
four nodal cubic. 

We briefly recall their construction  here, for 
details we refer to the original article
\cite{mlp}, example 4.1.

Let $\Lambda$ in $\PP^2$  be a complete 
quadrilateral and 
denote the vertices by $P_1, \ldots,
P_6$.

We have labeled the 
vertices in a way that
\begin{itemize}
  \item the intersection point 
of the line $\overline{P_1P_2}$ and the line 
$\overline{P_3P_4}$ is $P_5$,
\item the 
intersection point of $\overline{P_1P_4}$ and $\overline{P_2P_3}$ is 
$P_6$.
\end{itemize}

Let $Y \rightarrow \PP^2$ be the blow up in 
$P_1, \ldots , P_6$, 
denote by $L$  the total transform of a line in 
$\PP^2$, let $E_i$, $1
\leq i \leq6$, be the exceptional curve lying over 
$P_i$.
Moreover, we denote by $S_i$, $1 \leq i \leq 4$, the strict 
transforms on $Y$ of the sides $S_i : = \overline{P_i P_{i+1}}$ for $ 1\leq i \leq 3$,
$S_4 : = \overline{P_4 P_{1}}$, of the quadrilateral $\Lambda$.

We denote by $\Delta_i$, $1 \leq i \leq 3$, the strict transforms of the three diagonals of the complete quadrilateral on $Y$, i.e., 
\begin{itemize}
\item $\Delta_1 \equiv L - E_1 - E_3$,
\item $\Delta_2 \equiv L - E_2 - E_4$,
\item $\Delta_3 \equiv L - E_5 - E_6$.
\end{itemize}

Observe that the four (-2) curves $S_i$ come from the resolution
of the 4 nodes of the cubic surface $\Sigma$ which is the anticanonical image of $Y$,
and the curves $E_i$ are the strict transforms of the 6 lines in $\Sigma$ connecting pairs of nodal points.

The surface $\Sigma$ contains also a triangle of lines (joining the midpoints of opposite edges
of the tetrahedron with sides the lines corresponding to the curves $E_i$). These are the 3 strict transforms 
 $\Delta_1$, $\Delta_2$,
$\Delta_3$ of the three diagonals of the complete quadrilateral $\Lambda$. 

For each line $\De_i$ in the cubic surface  $\Sigma$ we consider the pencil of planes containing
them, and the base point free pencil of residual conics, which we denote by $| f_i|$. Hence we have
$$| f_i| = |(-K_Y) -  \De_i| , \ \  \De_i + f_i \equiv (-K_Y) .$$

In the plane realization we have:
\begin{itemize}
  \item $f_1$ is the strict 
transform on $Y$ of a general element of
the pencil of conics $\Gamma_1$ through
$P_2,P_4,P_5,P_6$,
\item $f_2$ is the strict transform on $Y$ of a general element of
the pencil of conics $\Gamma_2$ through $P_1,P_3,P_5,P_6$,
\item $f_3$ is the strict transform on $Y$ of a general element of
the pencil of conics $\Gamma_3$ through $P_1,P_2,P_3,P_4$.
\end{itemize}

It is then easy to see that each curve $S_h$ is disjoint from the other curves $S_j$ ($j\neq h$), $\De_i$, and $f_i$,
if $f_i$ is smooth. Moreover, 
$$\De_i \cdot f_i = 2,  \  \  \De_i \cdot f_j = 0, i \neq j, \ \ f_i ^2 = 0 , \ \  f_j f_i = 2,  i \neq j . $$

\begin{definition}\label{Inouedivisors}
   We define the {\em Inoue divisors} on $Y$ as follows:
\begin{itemize}
\item $D_1:= \Delta_1 + f_2 + S_1 + S_2$, where $f_2 \in |f_2|$ smooth;
\item $D_2:= \Delta_2 + f_3$, where $f_3 \in |f_3|$ smooth;
\item$D_3:= \Delta_3 + f_1 +f_1'+ S_3 + S_4$, where $f_1, f_1' \in
|f_1|$ smooth.
\end{itemize}

\end{definition} 

Let $\pi \colon \tilde{S} \rightarrow Y$ be the
bidouble covering with branch divisors $D_1, D_2, D_3$ (associated to the 
3 nontrivial elements $\gamma_1, \gamma_2, \gamma_3$ of the Galois group $G \cong (\ZZ /2 \ZZ)^2$).

 Then $\tilde{S}$ is smooth and by the previous remarks we see
that over each $S_i$ there are two disjoint $(-1)$-curves.
Contracting these eight exceptional curves we obtain a minimal surface
with $p_g=0$ and $K_S^2 = 7$. 

Moreover, $S$ is a smooth $(\ZZ/2 \ZZ)^2$-covering of the four nodal cubic $\Sigma$, obtained from $Y$ by contracting the four $(-2)$-curves $S_j$ and by \cite{mlp} these are exactly the Inoue surfaces.

\begin{rem}\label{ratvar}
We immediately see that there is an open dense subset  in the product 
$$
|f_1| \times |f'_1| \times |f_2| \times |f_3| \cong (\PP^1)^4
$$
parametrizing the family of Inoue surfaces.
\end{rem}

\begin{rem}
   Denoting by $\chi_i \in G^*$ the nontrivial character orthogonal to $\gamma_i$, the non trivial character sheaves of this bidouble cover are
\begin{itemize}
   \item $\mathcal{L}_1 = \hol_Y( - K_Y + f_1 - E_4)$;
\item $\mathcal{L}_2 = \hol_Y( - 2 K_Y  - E_5 - E_6)$;
\item $\mathcal{L}_3 = \hol_Y( - K_Y + L - E_1 - E_2  - E_3)$.
\end{itemize}
I.e., $G$ acts on $\mathcal{L}_i^{-1}$ via the character $\chi_i$.
\end{rem}

Then we have the following:

\begin{prop}\label{charbican}[Mendes Lopes, Pardini \cite{mlp}]
The decomposition of $H^0(\tilde{S},\hol_{\tilde{S}}(2K_{\tilde{S}}))$ as sum of  isotypical components is as follows:
$$
H^0(\tilde{S},\hol_{\tilde{S}}(2K_{\tilde{S}})) \cong H^0(\tilde{S},\hol_{\tilde{S}}(2K_{\tilde{S}}))^{G} \oplus \bigoplus_{i=1}^3 H^0(\tilde{S},\hol_{\tilde{S}}(2K_{\tilde{S}}))^{\chi_i}, where
$$
\begin{itemize}
\item[(0)] $H^0(\tilde{S},\hol_{\tilde{S}}(2K_{\tilde{S}}))^{G} = H^0(Y,\hol_Y(-K_Y + f_1 + \sum S_j)) \cong \CC^7$,
\item[(1)] $H^0(\tilde{S},\hol_{\tilde{S}}(2K_{\tilde{S}}))^{\chi_1} = H^0(Y,\hol_Y(-K_Y + f_1 + \sum S_j - \mathcal{L}_1)) \cong \CC$,
\item[(2)] $H^0(\tilde{S},\hol_{\tilde{S}}(2K_{\tilde{S}}))^{\chi_2} = H^0(Y,\hol_Y(-K_Y + f_1 + \sum S_j- \mathcal{L}_2)) =0$,
\item[(3)] $H^0(\tilde{S},\hol_{\tilde{S}}(2K_{\tilde{S}}))^{\chi_3} = H^0(Y,\hol_Y(-K_Y + f_1 + \sum S_j- \mathcal{L}_3)) =0$.
\end{itemize}
\end{prop}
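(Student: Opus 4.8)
The plan is to reduce the statement to the computation of four cohomology groups on the rational surface $Y$, using the standard structure theory of $(\ZZ/2\ZZ)^2$-covers. For the smooth bidouble cover $\pi\colon\tilde S\to Y$ one has the $G$-equivariant splitting
$$
\pi_*\hol_{\tilde S}=\hol_Y\oplus\mathcal{L}_1^{-1}\oplus\mathcal{L}_2^{-1}\oplus\mathcal{L}_3^{-1},
$$
where $G$ acts on $\mathcal{L}_i^{-1}$ through $\chi_i$, as recorded in the remark preceding the statement. First I would express $2K_{\tilde S}$ as a pullback: writing $R=R_1+R_2+R_3$ for the reduced ramification over $D_1,D_2,D_3$ and using $\pi^*D_i=2R_i$ together with $K_{\tilde S}=\pi^*K_Y+R$, one obtains $2K_{\tilde S}=\pi^*(2K_Y+D_1+D_2+D_3)$. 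The projection formula then gives a $G$-equivariant isomorphism
$$
\pi_*\hol_{\tilde S}(2K_{\tilde S})\cong\hol_Y(2K_Y+D_1+D_2+D_3)\otimes(\hol_Y\oplus\mathcal{L}_1^{-1}\oplus\mathcal{L}_2^{-1}\oplus\mathcal{L}_3^{-1}),
$$
so that passing to global sections and reading off eigenspaces identifies the invariant part with $H^0(Y,2K_Y+\sum_j D_j)$ and the $\chi_i$-part with $H^0(Y,2K_Y+\sum_j D_j-\mathcal{L}_i)$.

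The second step is pure bookkeeping in $\mathrm{Pic}(Y)=\ZZ L\oplus\bigoplus_i\ZZ E_i$. Substituting the classes of $\Delta_i$, $f_i$, $S_j$ and using $-K_Y=3L-\sum_i E_i$, a short computation yields the linear equivalence $2K_Y+\sum_j D_j\equiv-K_Y+f_1+\sum_j S_j$, which is exactly the twisting divisor appearing in the four asserted groups; subtracting the classes $\mathcal{L}_i$ from the remark reproduces the bundles of items (1)--(3). This matches the four cohomology groups with those in the statement.

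The real content lies in the four dimension counts. For the invariant part, set $D:=-K_Y+f_1+\sum_j S_j$. Since $-K_Y\cdot S_j=0$ (adjunction on the $(-2)$-curves $S_j$), $f_1\cdot S_j=0$, and the $S_j$ are mutually disjoint, one has $D\cdot S_j=-2$, so each $S_j$ is a fixed component; peeling them off gives $H^0(Y,D)=H^0(Y,-K_Y+f_1)$. As $-K_Y$ is nef and big ($Y$ being the weak del Pezzo with anticanonical model $\Sigma$) and $f_1$ is base-point-free, Kawamata--Viehweg vanishing and Riemann--Roch give $h^0(Y,-K_Y+f_1)=\chi(-K_Y+f_1)=7$. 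For the $\chi_1$-part the class simplifies to $\sum_j S_j+E_4$; the same peeling of the four fixed $S_j$ reduces it to $H^0(Y,E_4)=\CC$, giving dimension $1$.

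The vanishing of the $\chi_2$- and $\chi_3$-parts is where I expect the main difficulty, since here one must show that two linear systems on $\PP^2$ are empty rather than merely compute an Euler characteristic (both twisted bundles have $\chi\le 0$ with $h^2=0$, so the point is genuinely $h^0=0$). The $\chi_2$-class is $3L-E_1-2E_2-E_3-2E_4-E_5-E_6$, the plane cubics with nodes at $P_2,P_4$ through $P_1,P_3,P_5,P_6$; by B\'ezout the diagonal $\overline{P_2P_4}$ splits off any member, leaving a conic through all six vertices $P_1,\dots,P_6$ of the complete quadrilateral, and a direct check in coordinates shows that these six points lie on no conic. The $\chi_3$-class $5L-E_1-2E_2-E_3-3E_4-3E_5-3E_6$ (quintics with triple points at $P_4,P_5,P_6$) forces, again by B\'ezout, the three lines $\Delta_3$, $S_3$, $S_4$ into every member, and the residual system of conics singular at $P_2$ and passing through $P_4,P_5,P_6$ is empty because no three of $P_2,P_4,P_5,P_6$ are collinear, so two lines through $P_2$ cannot meet all three. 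Assembling the four counts gives the asserted decomposition; as a consistency check the total dimension is $7+1+0+0=8=\chi(\hol_S)+K_S^2=h^0(\tilde S,2K_{\tilde S})$.
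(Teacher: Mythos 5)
Your proof is correct, but note that the paper itself contains no argument for this proposition: its ``proof'' is the single line ``See \cite{mlp}, example 4.1., p.271'', i.e.\ it defers entirely to Mendes Lopes--Pardini. What you have written is a self-contained reconstruction of that outsourced computation, and every step checks out. The eigensheaf decomposition of $\pi_*\hol_{\tilde{S}}$ combined with $2K_{\tilde{S}}=\pi^*(2K_Y+D_1+D_2+D_3)$ and the projection formula correctly identifies the four eigenspaces; the Picard-group bookkeeping is exact (both $2K_Y+\sum_j D_j$ and $-K_Y+f_1+\sum_j S_j$ equal $9L-3E_1-4E_2-3E_3-4E_4-4E_5-4E_6$); peeling off the four $(-2)$-curves $S_j$ (each meets the relevant class negatively) and applying Kawamata--Viehweg plus Riemann--Roch gives the dimensions $7$ and $1$ for the invariant and $\chi_1$-parts. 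You also correctly locate the genuine content in items (2) and (3), where $\chi=0$ and $\chi=-2$ respectively, so no Euler-characteristic argument can yield $h^0=0$; your B\'ezout splittings are valid ($\Delta_2$ meets the $\chi_2$-class in $-1$, and $\Delta_3,S_3,S_4$ meet the $\chi_3$-class negatively), and both residual emptiness claims hold --- for the conic through the six vertices you do not even need coordinates, since each of the four sides of the quadrilateral contains three of the vertices and would thus have to be a component of the conic by B\'ezout, absurd for a degree-$2$ curve; likewise a conic singular at $P_2$ through $P_4,P_5,P_6$ would force three of these points to be collinear. The comparison between the two routes is therefore simple: the paper buys brevity by citing \cite{mlp}, while your version makes the proposition verifiable within the paper itself, and your closing consistency check $7+1+0+0=8=h^0(\tilde{S},\hol_{\tilde{S}}(2K_{\tilde{S}}))$ ties the eigenspace dimensions directly to the bicanonical data exploited later in Corollary \ref{dimchar} and Proposition \ref{kforinoue}.
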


\begin{proof}
See \cite{mlp}, example 4.1., p.271.
\end{proof}

\begin{cor}\label{dimchar}
The decomposition of $H^0(S, \hol_S(2K_S))$ in invariant and anti-invariant part with respect to $\gamma_i$ is as follows:
$$
H^0(S, \hol_S(2K_S))^{+,\gamma_i} \cong H^0(\tilde{S},\hol_{\tilde{S}}(2K_{\tilde{S}}))^{G} \oplus H^0(\tilde{S},\hol_{\tilde{S}}(2K_{\tilde{S}}))^{\chi_i},
$$
$$
H^0(S, \hol_S(2K_S))^{-,\gamma_i} \cong H^0(\tilde{S},\hol_{\tilde{S}}(2K_{\tilde{S}}))^{\chi_j} \oplus H^0(\tilde{S},\hol_{\tilde{S}}(2K_{\tilde{S}}))^{\chi_k},
$$
where $\{i,j,k\} = \{1,2,3\}$.

In particular, $h^0(S, \hol_S(2K_S))^{-,\gamma_1} = 0$  and $h^0(S, \hol_S(2K_S))^{-,\gamma_i} = 1$  for $i=2,3$.
\end{cor}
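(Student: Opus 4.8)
The plan is to derive both isomorphisms purely from the $G$-module structure of the bicanonical system, using Proposition \ref{charbican} as the only input, so that the statement becomes essentially character arithmetic for $G\cong(\ZZ/2\ZZ)^2$ once the passage from $\tilde{S}$ to $S$ is justified.

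First I would establish a $G$-equivariant identification of the two bicanonical spaces. By construction the contraction $\sigma\colon \tilde{S}\to S$ of the eight $(-1)$-curves is $G$-equivariant, since the $G$-action on $\tilde{S}$ descends to $S$, realizing $S$ as a $(\ZZ/2\ZZ)^2$-cover of $\Sigma$. Writing $E_1,\dots,E_8$ for the contracted curves, one has $K_{\tilde{S}}=\sigma^*K_S+\sum_l E_l$, hence $2K_{\tilde{S}}=\sigma^*(2K_S)+2\sum_l E_l$. As the $E_l$ are $\sigma$-exceptional, the projection formula gives $\sigma_*\hol_{\tilde{S}}(2\sum_l E_l)=\hol_S$, so that $\sigma_*\hol_{\tilde{S}}(2K_{\tilde{S}})\cong\hol_S(2K_S)$ and therefore
$$
H^0(\tilde{S},\hol_{\tilde{S}}(2K_{\tilde{S}}))\cong H^0(S,\hol_S(2K_S))
$$
as $G$-representations.

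Next I would observe that since $G$ is abelian, the isotypical decomposition of Proposition \ref{charbican} refines every $\gamma_i$-eigenspace decomposition: on the $\chi_m$-isotypical summand the involution $\gamma_i$ acts by the scalar $\chi_m(\gamma_i)\in\{\pm1\}$, and on the $G$-invariant summand it acts trivially. Because $\chi_m$ is the nontrivial character with kernel $\langle\gamma_m\rangle$, one has $\chi_m(\gamma_i)=+1$ exactly when $i=m$, and $\chi_m(\gamma_i)=-1$ otherwise. Consequently the $(+)$-eigenspace of $\gamma_i$ is the sum of the trivial part and the $\chi_i$-part, while the $(-)$-eigenspace is the sum of the two remaining characters $\chi_j,\chi_k$ with $\{i,j,k\}=\{1,2,3\}$. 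Transporting through the equivariant isomorphism of the first step yields the two displayed formulas.

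The final dimension assertion then follows by substituting the values $h^0(\tilde{S})^{G}=7$, $h^0(\tilde{S})^{\chi_1}=1$, $h^0(\tilde{S})^{\chi_2}=h^0(\tilde{S})^{\chi_3}=0$ from Proposition \ref{charbican}: the anti-invariant space for $\gamma_1$ is $H^0(\tilde{S})^{\chi_2}\oplus H^0(\tilde{S})^{\chi_3}$, of dimension $0$, whereas for $i=2,3$ it contains exactly one copy of the one-dimensional $\chi_1$-part, giving dimension $1$. I expect the only point requiring genuine care to be the first step, namely checking that the blow-down is $G$-equivariant and leaves the bicanonical representation unchanged; everything afterward is formal character computation for $(\ZZ/2\ZZ)^2$.
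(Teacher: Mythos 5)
Your proposal is correct and matches the paper's (implicit) reasoning: the corollary is stated there without proof precisely because it follows from Proposition \ref{charbican} by the character bookkeeping you carry out, namely that $\gamma_i$ acts by $\chi_m(\gamma_i)=\pm 1$ on each isotypical summand, with $\chi_i(\gamma_i)=+1$ by the orthogonality convention. Your first step, the $G$-equivariant identification $H^0(S,\hol_S(2K_S))\cong H^0(\tilde{S},\hol_{\tilde{S}}(2K_{\tilde{S}}))$ via the blow-down of the eight $(-1)$-curves, is exactly the standard fact the paper takes for granted, and your verification of it is sound.
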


\section{Quotients of Inoue surfaces by an involution}

In order to prove  theorem \ref{main} we have to show that for each automorphism $\sigma \in G= (\ZZ / 2 \ZZ)^2$ of an Inoue surface $S$ we have
$$
\kappa(S/ \langle \sigma \rangle) \leq 1.
$$

Before doing this, we have to fix some notation. 

\noindent
Let $S$ be a minimal regular surface of general type and let $\sigma$ be an involution on $S$. Then $\sigma$ is biregular, and its fixed locus consists of $k$ isolated points and a nonsingular (not necessarily connected) curve $R$. The quotient $T:=S/\langle \sigma \rangle$ has $k$ nodes, and resolving them we get a cartesian diagram of morphisms

\begin{equation}\label{diagram}
\xymatrix{
\hat{S}\ar_{\hat{\pi}}[d]\ar^{\epsilon}[r]&S\ar_{\pi}[d]\\
\hat{T}\ar[r]&T
}
\end{equation}
with vertical maps finite of degree $2$ and horizontal maps birational. 

We denote by $\Delta$ the branch curve $\pi(R)$ and by $E_1,\ldots,E_k$ the exceptional curves of $\epsilon$.

The action of $\sigma$ on $\hat{S}$ yields a decomposition
$\hat{\pi}_* \hol_{\hat{S}}=\hol_{\hat{T}} \oplus \hol_{\hat{T}}(-\hat{\delta})$, with $2\hat{\delta} \equiv \Delta +\sum_1^k \hat{\pi}(E_i)$. Recall that $K_{\hat{S}}\equiv \hat{\pi}^*(K_{\hat{T}}+\hat{\delta})$.

Then (cf.  \cite{mlp2}, \cite{borrelli}):

\begin{lemma}
\begin{equation}\label{rito}
0\leq k= K_S^2 + 6\chi(\hol_{\hat{T}}) - 2\chi(\hol_S) -
2h^0(\hol_{\hat{T}}(2K_{\hat{T}} + \hat{\delta})).
\end{equation} 
Moreover, if $p_g(\hat{T}) = 0$, then the biconical map of $S$ factors through $\sigma$ if and only if $h^0(\hol_{\hat{T}}(2K_{\hat{T}} + \hat{\delta})) = 0$.
\end{lemma}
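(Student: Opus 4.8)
The plan is to lift $\sigma$ to the involution $\hat\sigma$ on $\hat S$ and to exploit the smooth double cover $\hat\pi\colon\hat S\to\hat T$. Near an isolated fixed point $\sigma$ is linearized as $(x,y)\mapsto(-x,-y)$, so on $\hat S$ the exceptional curve $E_i$ over it is fixed pointwise; thus the branch locus of $\hat\pi$ is $\Delta+\sum_i C_i$ with $C_i=\hat\pi(E_i)$ the $(-2)$-curves, in accordance with $2\hat\delta\equiv\Delta+\sum\hat\pi(E_i)$. Writing $D:=K_{\hat T}+\hat\delta$, so that $K_{\hat S}=\hat\pi^*D$, the projection formula applied to $2K_{\hat S}=\hat\pi^*(2D)$ splits the space of bicanonical forms
\[
H^0(\hat S,2K_{\hat S})=H^0(\hat T,2K_{\hat T}+2\hat\delta)\oplus H^0(\hat T,2K_{\hat T}+\hat\delta),
\]
the first summand being $\hat\sigma$-invariant and the second $\hat\sigma$-anti-invariant. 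The anti-invariant eigensheaf $L:=2K_{\hat T}+\hat\delta=K_{\hat T}+D$ is the one whose $h^0$ enters (\ref{rito}).

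Next I would compute $\chi(\hat T,L)$ by Riemann--Roch. Two inputs feed the computation: first, $K_{\hat S}=\hat\pi^*D$ gives $2D^2=K_{\hat S}^2=K_S^2-k$, since $\epsilon$ is the blow-up of the $k$ fixed points; second, the splitting $\hat\pi_*\hol_{\hat S}=\hol_{\hat T}\oplus\hol_{\hat T}(-\hat\delta)$ together with $\chi(\hol_{\hat S})=\chi(\hol_S)$ yields $\chi(\hol_S)=2\chi(\hol_{\hat T})+\tfrac12\hat\delta(\hat\delta+K_{\hat T})$, i.e. $\hat\delta\cdot D=2(\chi(\hol_S)-2\chi(\hol_{\hat T}))$. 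Feeding $D^2$ and $\hat\delta\cdot D$ into $\chi(L)=\chi(\hol_{\hat T})+\tfrac12(K_{\hat T}+D)\cdot D$ gives, after a short calculation,
\[
\chi(\hat T,L)=3\chi(\hol_{\hat T})-\chi(\hol_S)+\tfrac{K_S^2-k}{2}.
\]
Rearranging this identity is exactly (\ref{rito}), the inequality $0\le k$ being automatic; so everything reduces to proving $h^0(\hat T,L)=\chi(\hat T,L)$, i.e. the vanishing $h^1(\hat T,L)=h^2(\hat T,L)=0$.

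The vanishing is the crux, and the point I expect to cost the most care. Here $h^2(\hat T,L)=h^0(\hat T,-D)=0$ because $D$ is big ($\hat\pi^*D=K_{\hat S}$ is big). The delicate point is $h^1(\hat T,L)=0$: the divisor $D$ is big but \emph{not} nef, since $D\cdot C_i=K_{\hat S}\cdot E_i=-1$ on the $(-2)$-curves $C_i$, so Kawamata--Viehweg does not apply verbatim. I would instead argue through $\hat S$. By the projection formula $H^1(\hat S,2K_{\hat S})=H^1(\hat T,2K_{\hat T}+2\hat\delta)\oplus H^1(\hat T,L)$, so it suffices to show that the anti-invariant part of $H^1(\hat S,2K_{\hat S})$ vanishes. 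Since $S$ is minimal of general type, $H^1(S,2K_S)=0$, and as $\epsilon$ only blows up the $k$ fixed points, Leray gives $H^1(\hat S,2K_{\hat S})\cong H^0(S,R^1\epsilon_*\hol_{\hat S}(2K_{\hat S}))$, a sheaf of length $k$ supported on the $E_i$. Because $\hat\sigma$ fixes each $E_i$ pointwise and acts by $-1$ on the normal twist $\hol_{\hat S}(E_i)$, hence trivially on the even power $\hol_{\hat S}(2E_i)$, it acts trivially on this local cohomology; therefore all of $H^1(\hat S,2K_{\hat S})$ is $\hat\sigma$-invariant and $H^1(\hat T,L)=0$. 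This finishes the formula for $k$. (Alternatively one can derive $h^1(\hat T,L)=0$ from Kawamata--Viehweg on the canonical model $T$, where the $C_i$ are contracted.)

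For the second assertion I would pass the eigenspaces back to $S$: $\epsilon^*$ is a $\sigma$-equivariant isomorphism, so $H^0(S,2K_S)^{-,\sigma}\cong H^0(\hat S,2K_{\hat S})^-\cong H^0(\hat T,L)$. The biconical map of $S$ is composed with $\sigma$ precisely when it is constant on the general $\sigma$-orbit, which fails as soon as an anti-invariant bicanonical section exists, since such a section changes sign on the orbit and hence separates it; thus the biconical map factors through $\sigma$ if and only if $H^0(\hat T,L)=0$. The hypothesis $p_g(\hat T)=0$ enters to ensure that the invariant subsystem already defines a nonconstant map, so that factoring is controlled solely by the vanishing of the anti-invariant part, making the equivalence exact.
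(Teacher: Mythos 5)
Your derivation of the formula (\ref{rito}) is essentially correct, and it is worth noting that the paper itself gives no proof of this lemma (it is quoted from \cite{mlp2} and \cite{borrelli}), so your argument stands on its own: the eigensheaf splitting of $\hat{\pi}_*\hol_{\hat{S}}(2K_{\hat{S}})$, the Riemann--Roch computation giving $\chi(2K_{\hat{T}}+\hat{\delta})=3\chi(\hol_{\hat{T}})-\chi(\hol_S)+\tfrac{1}{2}(K_S^2-k)$, the vanishing $h^2=0$ by bigness of $K_{\hat{T}}+\hat{\delta}$, and the equivariant Leray argument for $h^1(\hat{T},2K_{\hat{T}}+\hat{\delta})=0$ all check out. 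One small point should be tightened: the skyscraper $R^1\epsilon_*\hol_{\hat{S}}(2K_{\hat{S}})$ at a fixed point $p$ is $H^1(E,\hol_E(2E))\otimes(2K_S)_p$, so besides the trivial action on $\hol_E(2E)=N_E^{\otimes 2}$ you must also observe that $\sigma$ acts trivially on the fibre $(2K_S)_p$; this holds because $d\sigma_p=-\mathrm{id}$ has determinant $1$, but it is not covered by your remark about even powers of $\hol_{\hat{S}}(E_i)$.

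The genuine gap is in the ``only if'' half of the second assertion. You argue that a nonzero anti-invariant bicanonical section ``changes sign on the orbit and hence separates it''; this is true only if there is also a nonzero \emph{invariant} section. If $H^0(2K_S)=H^0(2K_S)^{-}$, then every coordinate of the bicanonical map changes sign simultaneously, the projective image point is unchanged, and the map \emph{does} factor through $\sigma$ even though $h^0(2K_{\hat{T}}+\hat{\delta})\neq 0$. So the equivalence hinges on proving $H^0(2K_S)^{+}=H^0(\hat{T},2K_{\hat{T}}+2\hat{\delta})\neq 0$, and this is precisely where the hypothesis $p_g(\hat{T})=0$ must do its work; you assert that it ``ensures that the invariant subsystem already defines a nonconstant map'' but give no argument, and this assertion is the entire nontrivial content of the hypothesis. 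It genuinely requires proof, for instance as follows. If $p_g(S)>0$: since the invariant part of $H^0(K_S)$ equals $H^0(K_{\hat{T}})=0$, all canonical sections are anti-invariant, and their squares give nonzero invariant bicanonical sections. If $p_g(S)=0$ (the case actually used in this paper, where also $q(S)=0$, hence $q(\hat{T})=0$ and $\chi(\hol_S)=\chi(\hol_{\hat{T}})=1$): suppose $H^0(2K_S)^{+}=0$; then $h^0(2K_{\hat{T}}+\hat{\delta})=h^0(2K_S)=K_S^2+1$, and substituting into (\ref{rito}) gives $k=2-K_S^2\leq 1$, contradicting $k=K_S\cdot R+4\geq 4$ from Lemma \ref{formulas}. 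Without an argument of this kind the ``if and only if'' is not established by your proof.
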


Combining the above lemma with corollary \ref{dimchar} we obtain:

\begin{prop}\label{kforinoue}
Let $S$ be an Inoue surface with $p_g(S) = 0$ and $K_S^2 = 7$ and let $\gamma_i$ be one of the nontrivial elements of $G \cong (\ZZ / 2 \ZZ)^2$. Then we have for the number of isolated fixed points $k_i$ of $\gamma_i$:
\begin{itemize}
\item $k_1 = 11$, in particular the biconical map factors through $\gamma_1$;
\item $k_2 = k_3 = 9$.
\end{itemize}
\end{prop}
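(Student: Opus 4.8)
The plan is to feed Corollary~\ref{dimchar} into the inequality~\eqref{rito} by identifying the quantity $h^0(\hol_{\hat{T}}(2K_{\hat{T}}+\hat{\delta}))$ occurring there with the dimension of the $\gamma_i$-anti-invariant part of the bicanonical system of $S$, which was computed by Mendes Lopes and Pardini.

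First I would fix $\sigma=\gamma_i$ and pass to the cartesian diagram~\eqref{diagram}. Since $K_{\hat{S}}\equiv \hat{\pi}^*(K_{\hat{T}}+\hat{\delta})$, the projection formula applied to the smooth double cover $\hat{\pi}$, together with $\hat{\pi}_*\hol_{\hat{S}}=\hol_{\hat{T}}\oplus\hol_{\hat{T}}(-\hat{\delta})$, gives
\begin{equation*}
\hat{\pi}_*\hol_{\hat{S}}(2K_{\hat{S}}) \cong \hol_{\hat{T}}(2K_{\hat{T}}+2\hat{\delta}) \oplus \hol_{\hat{T}}(2K_{\hat{T}}+\hat{\delta}),
\end{equation*}
where the first summand is the $\sigma$-invariant and the second the $\sigma$-anti-invariant eigensheaf. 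Taking global sections identifies $H^0(\hat{S},\hol_{\hat{S}}(2K_{\hat{S}}))^{-}$ with $H^0(\hat{T},\hol_{\hat{T}}(2K_{\hat{T}}+\hat{\delta}))$. Next I would remove the blow-up $\epsilon$: as $\epsilon$ contracts the $(-1)$-curves $E_1,\dots,E_k$ over the isolated fixed points, one has $K_{\hat{S}}\equiv \epsilon^*K_S+\sum_i E_i$, and since $\epsilon_*\hol_{\hat{S}}(m\sum_i E_i)=\hol_S$ for $m\geq 0$ the projection formula yields a $\sigma$-equivariant isomorphism $H^0(\hat{S},\hol_{\hat{S}}(2K_{\hat{S}}))\cong H^0(S,\hol_S(2K_S))$. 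Combining the two steps,
\begin{equation*}
h^0(\hol_{\hat{T}}(2K_{\hat{T}}+\hat{\delta})) = h^0(S,\hol_S(2K_S))^{-,\gamma_i},
\end{equation*}
which by Corollary~\ref{dimchar} equals $0$ for $i=1$ and $1$ for $i=2,3$.

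Finally I would insert these values into~\eqref{rito}. Since $S$ is minimal of general type with $p_g=q=0$ we have $K_S^2=7$ and $\chi(\hol_S)=1$; moreover $\hat{T}$ is smooth with $p_g=q=0$ (a quotient of the regular surface $S$ stays regular, and nodes are rational singularities so $p_g(\hat{T})=p_g(S)=0$), whence $\chi(\hol_{\hat{T}})=1$. Thus~\eqref{rito} becomes
\begin{equation*}
k_i = 7 + 6 - 2 - 2\,h^0(\hol_{\hat{T}}(2K_{\hat{T}}+\hat{\delta})) = 11 - 2\,h^0(\hol_{\hat{T}}(2K_{\hat{T}}+\hat{\delta})),
\end{equation*}
giving $k_1=11$ and $k_2=k_3=9$. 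For $\gamma_1$ the vanishing $h^0(\hol_{\hat{T}}(2K_{\hat{T}}+\hat{\delta}))=0$ together with $p_g(\hat{T})=0$ yields, by the second assertion of the preceding Lemma, that the biconical map of $S$ factors through $\gamma_1$.

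The main obstacle is the bookkeeping in the two identification steps: one must check that the eigensheaf decomposition of $\hat{\pi}_*\hol_{\hat{S}}(2K_{\hat{S}})$ matches the $\sigma$-action so that the anti-invariant summand is precisely $\hol_{\hat{T}}(2K_{\hat{T}}+\hat{\delta})$ and not the invariant one, and that descending along $\epsilon$ preserves the eigenspace decomposition. Once these identifications are secured, the statement reduces to substituting the Mendes Lopes--Pardini dimension count into the numerical formula, which is routine.
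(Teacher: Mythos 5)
Your proposal is correct and follows essentially the same route as the paper: the paper's proof consists precisely of substituting $\chi(\hol_S)=\chi(\hol_{\hat{T}})=1$ into formula~\eqref{rito}, invoking the identification $h^0(\hol_{\hat{T}}(2K_{\hat{T}}+\hat{\delta})) = h^0(\hat{S},\hol_{\hat{S}}(2K_{\hat{S}}))^{-,\gamma_i} = h^0(S,\hol_S(2K_S))^{-,\gamma_i}$, and applying Corollary~\ref{dimchar}. The only difference is that you spell out, via the projection formula and the eigensheaf decomposition of $\hat{\pi}_*\hol_{\hat{S}}(2K_{\hat{S}})$, the identification that the paper states without proof, and your bookkeeping of invariant versus anti-invariant summands is correct.
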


\begin{proof}
Note that $k_i = K_S^2 + 6\chi(\hol_{\hat{T}}) - 2\chi(\hol_S) -
2h^0(\hol_{\hat{T}}(2K_{\hat{T}} + \hat{\delta})) = 7 +4 -2h^0(\hol_{\hat{T}}(2K_{\hat{T}} + \hat{\delta}))$. Now use 
$$
h^0(\hol_{\hat{T}}(2K_{\hat{T}} + \hat{\delta})) = h^0(\hat{S}, \hol_{\hat{S}}(2K_{\hat{S}}))^{-,\gamma_i} =h^0(S, \hol_S(2K_S))^{-,\gamma_i}.
$$
The claim follows now from corollary \ref{dimchar}.
\end{proof}

We also need the following results of \cite{dmlp}:

\begin{lemma}\label{formulas}[\cite{dmlp}, lemma 4.2.]
Let $S$ be a smooth surface with $p_g(X) = q(X) =0$ and let $\sigma$ be an automorphism of $S$ of order 2. We denote again the divisorial part of the fixed locus of $\sigma$ by $R$ and by $k$ the number of isolated fixed points. Moreover, let $t$ be the trace of $\sigma | H^2(S, \CC)$. Then:
$$
k = K_S \cdot R +4, \ \ t=2-R^2.
$$
Furthermore, using the notation of diagram \ref{diagram} we have the following relation for the Picard numbers:
$$
\rho(S) + t = 2 \rho(\hat{T}) -2k.
$$
\end{lemma}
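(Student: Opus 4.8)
The plan is to read off the first two identities from the two Lefschetz fixed point formulas applied to $\sigma$, and to derive the Picard number relation from the behaviour of $H^2$ under the blow-up $\epsilon$ and the double cover $\hat\pi$ of diagram \ref{diagram}. Throughout I use that $\Fix(\sigma)$ is the disjoint union of the $k$ isolated points and the smooth curve $R$, and that adjunction (applied on each component) gives $e(R)=-(K_S\cdot R+R^2)$, where $e(\cdot)$ denotes the topological Euler number. First I would run the topological Lefschetz fixed point formula. Since $p_g(S)=q(S)=0$ we have $H^1(S,\CC)=H^3(S,\CC)=0$, while $H^0$ and $H^4$ are one-dimensional with $\sigma^*$ acting trivially, so the Lefschetz number of $\sigma$ equals $2+t$. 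On the other hand it equals $e(\Fix(\sigma))=k+e(R)$, and combining with adjunction yields the relation $2+t=k-K_S\cdot R-R^2$.

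Next I would apply the holomorphic Lefschetz (Atiyah--Bott) fixed point formula. Because $p_g=q=0$ the alternating sum $\sum_q(-1)^q\operatorname{tr}(\sigma^*\mid H^q(S,\hol_S))$ equals $1$. At each isolated fixed point the differential of $\sigma$ has eigenvalues $-1,-1$ (otherwise the point would not be isolated for an involution), giving the local contribution $\tfrac{1}{(1-(-1))^2}=\tfrac14$, hence $\tfrac{k}{4}$ in total. Along $R$ the normal eigenvalue is $-1$, so the contribution is
\[
\int_R \frac{\operatorname{td}(TR)}{1+e^{-c_1(N_R)}}=\tfrac14\bigl(e(R)+R^2\bigr)=-\tfrac14\,K_S\cdot R,
\]
where the last equality again uses $e(R)=-(K_S\cdot R+R^2)$. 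Thus $1=\tfrac{k}{4}-\tfrac{1}{4}K_S\cdot R$, that is $k=K_S\cdot R+4$; substituting this into the topological relation $2+t=k-K_S\cdot R-R^2$ gives $t=2-R^2$. This settles the first two formulas.

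For the Picard number relation I pass to the blow-up $\epsilon\colon\hat S\to S$ at the $k$ isolated fixed points. The lifted involution $\hat\sigma$ then has purely divisorial fixed locus, since each exceptional curve $E_i$ is fixed pointwise ($\sigma$ acts by $-1$ on the tangent space), so $\hat\pi\colon\hat S\to\hat T$ is a smooth double cover and $\hat T$ is smooth with $p_g(\hat T)=q(\hat T)=0$. Blowing up $k$ points gives $\rho(\hat S)=\rho(S)+k$, and because $\hat\sigma^*$ fixes each class $[E_i]$ and coincides with $\sigma^*$ on $\epsilon^*H^2(S,\CC)$, the trace $\hat t$ of $\hat\sigma^*$ on $H^2(\hat S,\CC)$ is $\hat t=t+k$. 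On surfaces with $p_g=0$ one has $\rho=b_2$ (all of $H^2$ is of type $(1,1)$, hence algebraic by the Lefschetz $(1,1)$-theorem), and $H^2(\hat T,\QQ)=H^2(\hat S,\QQ)^{\hat\sigma}$, so
\[
\rho(\hat T)=b_2(\hat T)=\tfrac12\bigl(b_2(\hat S)+\hat t\bigr)=\tfrac12\bigl(\rho(S)+k+t+k\bigr),
\]
and rearranging yields exactly $\rho(S)+t=2\rho(\hat T)-2k$.

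The main obstacle is the holomorphic Lefschetz contribution of the positive-dimensional fixed curve $R$: one must invoke the Atiyah--Bott formula for a one-dimensional fixed component with normal eigenvalue $-1$ and expand the relevant characteristic class correctly, rather than reduce everything to isolated points. The remaining inputs --- checking that $\hat T$ again satisfies $p_g=q=0$ so that $\rho=b_2$ applies to it, and the bookkeeping $\hat t=t+k$, $\rho(\hat S)=\rho(S)+k$ under the blow-up --- are routine once the two Lefschetz computations are in place.
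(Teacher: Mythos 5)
Your proof is correct. Note that the paper itself gives no argument for this lemma: it is quoted verbatim from the reference [DMLP02] (Dolgachev--Mendes Lopes--Pardini, Lemma 4.2), and your derivation --- topological Lefschetz plus holomorphic (Atiyah--Bott/Atiyah--Singer) Lefschetz for the first two identities, then the blow-up/double-cover bookkeeping $\rho(\hat S)=\rho(S)+k$, $\hat t=t+k$, $b_2(\hat T)=\tfrac12(b_2(\hat S)+\hat t)$ together with $\rho=b_2$ on surfaces with $p_g=0$ --- is precisely the standard proof of that cited result, with the fixed-curve contribution $-\tfrac14 K_S\cdot R$ and the quotient-cohomology step both handled correctly.
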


\begin{prop}\label{rho}[\cite{dmlp}, prop.4.1.]
Let $Y$ be a surface with $p_g(Y) = q(Y) =0$ and Kodaira dimension $\kappa(Y) \geq 0$. Moreover, let $C_1, \ldots , C_k \subset Y$ be disjoint rational $(-2)$-curves. Then:
\begin{itemize}
\item[(i)] $k \leq \rho(Y)-2$;
\item[(ii)] if $k= \rho(Y) -2$, then $Y$ is minimal.
\end{itemize}
\end{prop}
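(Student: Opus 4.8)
The plan is to reduce the statement to lattice theory on $\mathrm{NS}(Y)_{\RR}$ and then to run a short case analysis on the canonical class. Since $p_g(Y)=q(Y)=0$, we have $\mathrm{NS}(Y)\otimes\RR = H^2(Y,\RR)$, and by the Hodge index theorem the intersection form has signature $(1,\rho(Y)-1)$. The classes of the disjoint $(-2)$-curves $C_1,\dots,C_k$ have intersection matrix $-2\,\mathrm{Id}$, hence are linearly independent and span a negative definite sublattice $\Lambda\cong A_1^{\,k}$. Writing $W:=\Lambda^{\perp}\subset \mathrm{NS}(Y)_{\RR}$, additivity of the signature gives that $W$ has signature $(1,\rho(Y)-k-1)$ and dimension $\rho(Y)-k$. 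Thus assertion (i) is \emph{equivalent} to $\dim W\ge 2$, i.e.\ to the existence of a non-zero vector of $W$ that is negative or isotropic, in addition to the positive direction coming from an ample class.

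First I would record two inputs. By adjunction each $(-2)$-curve satisfies $K_Y\cdot C_i=0$, so $K_Y\in W$; and if $A$ is ample then its orthogonal projection $A_W=A+\tfrac12\sum_i (A\cdot C_i)\,C_i$ lies in $W$ with $A_W^2=A^2+\tfrac12\sum_i (A\cdot C_i)^2>0$, giving the positive direction of $W$. Now I distinguish cases according to $K_Y$. If $K_Y^2<0$, then $K_Y$ is a negative vector of $W$, so $W$ cannot be the positive line and $\dim W\ge 2$. If $K_Y^2=0$ but $K_Y\not\equiv 0$, then $W$ contains a non-zero isotropic vector, which again forces $\dim W\ge 2$, since a positive definite space has no isotropic vectors. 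In both cases (i) follows at once.

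The remaining, and genuinely harder, cases are $K_Y\equiv 0$ and $K_Y^2>0$, where $K_Y$ sits in the positive direction and the cheap signature argument yields only $\dim W\ge 1$. Here I would pass to the contraction $f\colon Y\to \bar Y$ of the $C_i$ to a normal surface with $k$ nodes, which satisfies $p_g(\bar Y)=q(\bar Y)=0$, $\kappa(\bar Y)\ge 0$ and $\rho(\bar Y)=\rho(Y)-k$, so that (i) becomes $\rho(\bar Y)\ge 2$. Assume $\rho(\bar Y)=1$, so $K_{\bar Y}\equiv\lambda H$ for an ample $H$, with $\lambda\ge 0$ because $\kappa\ge 0$. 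If $\lambda=0$ then $K_Y\equiv 0$, so $Y$ is an Enriques surface with $\rho(Y)=10$, and $\rho(\bar Y)=1$ would force $k=9$ disjoint $(-2)$-curves, which must be excluded. If $\lambda>0$ then $K_Y=f^*K_{\bar Y}$ is nef and big, $Y$ is minimal of general type, and $\mathrm{NS}(Y)_{\QQ}$ is the orthogonal sum of $\QQ K_Y$ and the $C_i$. I expect this last step — showing that a minimal surface of general type with $p_g=0$ cannot have its Néron--Severi group rationally generated by $K_Y$ together with $\rho(Y)-1$ disjoint $(-2)$-curves, equivalently that its canonical model cannot have Picard number one — to be the main obstacle, and it is exactly the geometric content supplied by \cite{dmlp}.

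Finally, assertion (ii) I would deduce from (i) by a contraction argument. If $k=\rho(Y)-2$ but $Y$ were not minimal, blow down a $(-1)$-curve $E$ to obtain $Y'$ with $p_g=q=0$, $\kappa(Y')\ge 0$ and $\rho(Y')=\rho(Y)-1$. When $E$ is disjoint from the $C_i$, their images are still $k$ disjoint $(-2)$-curves on $Y'$, so $k=\rho(Y')-1$, violating part (i) for $Y'$; the case where $E$ meets some $C_i$ is handled by the analogous count after adjusting the configuration. This contradiction shows that $Y$ is minimal.
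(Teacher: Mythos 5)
Your proposal is not a complete proof: the lattice-theoretic part is correct but only disposes of the trivial cases, and the two cases you leave open are precisely where the entire content of the proposition lies. Concretely, after your (correct) signature argument — the $C_i$ span a negative definite $A_1^k$, its orthogonal complement $W$ has signature $(1,\rho-k-1)$, $K_Y\in W$, so the cases $K_Y^2<0$ and $K_Y^2=0,\ K_Y\not\equiv 0$ are immediate — what remains is: (a) an Enriques surface carrying $9$ disjoint $(-2)$-curves, and (b) a minimal surface of general type with $p_g=q=0$ whose canonical model has Picard number one, i.e.\ carries $9-K_Y^2$ nodes. Writing that (a) ``must be excluded'' and that (b) ``is exactly the geometric content supplied by \cite{dmlp}'' concedes the statement rather than proving it. Each case needs a genuine geometric input: for (a), one can pass to the K3 \'etale double cover, over which the nine curves pull back (each $\PP^1$ being simply connected) to $18$ disjoint $(-2)$-curves, exceeding the maximum of $16$ possible on a K3 surface — itself a nontrivial lattice/coding-theoretic fact; for (b), one needs a bound on the number of nodes of a canonical surface, e.g.\ Miyaoka's bound, which for $p_g=q=0$ reads $k\le 8-\tfrac{8}{9}K_Y^2$ and is strictly smaller than $9-K_Y^2$ whenever $K_Y^2<9$, the boundary case $K_Y^2=9$ being vacuous since a fake projective plane has $\rho=1$ and carries no $(-2)$-curves at all. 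None of these ingredients appears in your text, so part (i) remains unproved in exactly the hard range.

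Two further remarks. Your deduction of (ii) from (i) is also incomplete at the step you call ``the analogous count after adjusting the configuration'': when the $(-1)$-curve $E$ meets some $C_i$, one must show $E\cdot C_i=1$ for exactly one index $i$ (if $E\cdot C_1=\alpha\ge 2$, the image of $C_1$ would meet the nef canonical class of the minimal model negatively), conclude that the image $C_1'$ is again a $(-1)$-curve disjoint from the images of the other $C_j$, contract once more, and only then invoke (i); this is precisely the argument the paper writes out in the proof of Lemma \ref{k=r-3}. Finally, for context: the paper does not prove Proposition \ref{rho} at all — it quotes it from \cite{dmlp}, Prop.\ 4.1 — so relying on that source is legitimate; but judged as a self-contained argument, your proposal is a reduction of the statement to its two essential cases, not a proof of them.
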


In fact, we need also to consider the case $\rho(Y) = k-3$. Here $Y$ is not necessarily minimal, but using the same line of arguments as in prop. 4.1. of \cite{dmlp} we can show the following:

\begin{lemma}\label{k=r-3}
If in proposition \ref{rho} we have $k= \rho(Y) -3$, then the minimal model $\bar{Y}$ of $Y$ is either
\begin{itemize}
\item[-] equal to Y, or
\item[-] $Y \ra \bar{Y}$ is the blow up in one point, in particular $K_Y^2 = K_{\bar{Y}}^2 - 1$, or
\item[-] $Y \ra \bar{Y}$ is the blow up in two infinitely near points,  in particular $K_Y^2 = K_{\bar{Y}}^2 - 2$.
\end{itemize}
\end{lemma}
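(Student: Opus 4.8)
The plan is to compare $Y$ with its minimal model $\bar{Y}$ and to bound the number of disjoint $(-2)$-curves contributed separately by $\bar{Y}$ and by the exceptional locus of the contraction $\sigma\colon Y \ra \bar{Y}$. Write $n$ for the number of blow-ups making up $\sigma$, so that $\rho(Y) = \rho(\bar{Y}) + n$ and $K_Y^2 = K_{\bar{Y}}^2 - n$; since $p_g=q=0$ gives $\chi(\hol)=1$ and hence $\rho = 10 - K^2$ by Noether's formula, these two identities are equivalent. First I would split the curves $C_1, \ldots, C_k$ according to whether they are contracted by $\sigma$: say $k_0$ of them are not contracted and $k_1$ are, so $k = k_0 + k_1$.

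For the non-contracted curves I would argue that each such $C_i$ is in fact disjoint from the whole exceptional locus. Writing $K_Y \equiv \sigma^* K_{\bar{Y}} + Z$ with $Z$ effective and supported on the exceptional locus, and using that $K_{\bar{Y}}$ is nef (as $\kappa(\bar{Y}) = \kappa(Y) \geq 0$), the relation $K_Y \cdot C_i = 0$ forces $0 = K_{\bar{Y}} \cdot \sigma_* C_i + Z \cdot C_i$ with both summands nonnegative, hence $Z \cdot C_i = 0$. Thus $C_i$ maps isomorphically to a $(-2)$-curve $\bar{C}_i$ on $\bar{Y}$, and these images remain pairwise disjoint. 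Since $p_g$, $q$ and $\kappa$ are birational invariants, proposition \ref{rho}(i) applies to the minimal surface $\bar{Y}$ and yields $k_0 \leq \rho(\bar{Y}) - 2$.

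For the contracted curves I would establish the combinatorial bound $k_1 \leq \lfloor n/2 \rfloor$. The contracted $C_i$ are $(-2)$-curves inside the exceptional configuration, a disjoint union of trees of rational curves, one tree for each point of $\bar{Y}$ that is iteratively blown up. On a tree arising from $n_a$ blow-ups one computes $\sum_j (-E_j^2) = 2 n_a - 1$, and a $(-2)$-curve in such a tree meets exactly one or two of the other exceptional curves, the degree-one case occurring only for the first curve of the tree. As the contracted $C_i$ lying in one tree form a set of pairwise disjoint, hence non-adjacent, vertices, counting the edges incident to them bounds their number by $\lfloor n_a/2 \rfloor$ per tree; summing over all trees gives $k_1 \leq \lfloor n/2 \rfloor$.

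Combining the two estimates gives $k \leq \rho(\bar{Y}) - 2 + \lfloor n/2\rfloor = \rho(Y) - 2 - \lceil n/2\rceil$. Feeding in the hypothesis $k = \rho(Y) - 3$ forces $\lceil n/2\rceil \leq 1$, that is $n \leq 2$, which already produces the three cases $n=0,1,2$ (with the stated values of $K_Y^2$). When $n = 2$ equality must hold throughout, so $k_1 = 1$; but two blow-ups at two distinct points produce only two $(-1)$-curves and no $(-2)$-curve, forcing $k_1 = 0$. Hence the two points must be infinitely near, which is exactly the last case. The step I expect to be the main obstacle is the combinatorial bound on $k_1$: one has to control the self-intersections and incidences in an arbitrary tower of infinitely near blow-ups, and in particular to verify that the only way to realise $k_1 = 1$ with $n = 2$ is the infinitely near configuration.
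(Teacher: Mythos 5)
Your architecture is genuinely different from the paper's. The paper never passes to the full contraction $Y \ra \bar{Y}$: it picks a single irreducible $(-1)$-curve $E$, shows (using nefness of $K_{\bar{Y}}$ and the impossibility of two meeting $(-1)$-curves when $\kappa \geq 0$) that $E$ meets at most one nodal curve, and then transversally, contracts $E$ and, if necessary, the image of that one nodal curve, and invokes Proposition \ref{rho}(ii) to conclude that the surface reached after at most two contractions is already minimal. Your decomposition $k = k_0 + k_1$ relative to the minimal model, with Proposition \ref{rho}(i) applied on $\bar{Y}$ and a counting bound for the contracted $(-2)$-curves, is a legitimate alternative — but the combinatorial step, which you yourself single out as the main obstacle, is wrong as written.

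Both structural claims about the exceptional tree fail as soon as satellite points (points lying on two exceptional curves) occur. The identity $\sum_j(-E_j^2)=2n_a-1$ holds only when every blown-up point is free (lies on exactly one previous exceptional curve); a satellite blow-up increases the sum by $3$. More seriously, a degree-one $(-2)$-curve need not be the first curve of its tree: blow up $p_1$, then $p_2\in E_1$, then $p_3=E_1\cap E_2$; the strict transform of $E_2$ is now a $(-2)$-curve meeting only $E_3$, i.e.\ a leaf. Continuing with a free $p_4\in E_3$ and then $p_5=E_3\cap E_4$ yields \emph{two disjoint} leaf $(-2)$-curves, $E_2$ and $E_4$, in a single tree with $n_a=5$, so your edge count $\sum_{v\in I}\deg v\geq 2|I|-1$ is false and the bound $k_1\leq \lfloor n_a/2\rfloor$ is not established. (The same example, stopped after $p_4$, shows a $(-2)$-curve of degree three, contradicting your "one or two" claim, though that direction is harmless.) The bound itself is true and your proof is repairable by a lattice argument instead of a graph argument: in the orthogonal basis $e_1,\dots,e_n$ of total transforms, an exceptional curve is a $(-2)$-curve exactly when its class is $e_j-e_i$ with $p_i$ the unique point proximate to $p_j$, and a one-line computation shows two such curves have intersection number $\pm 1$ whenever their index pairs $\{j,i\}$ overlap; hence disjoint exceptional $(-2)$-curves use disjoint pairs and $k_1\leq\lfloor n/2\rfloor$. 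With that repair, your derivation of $n\leq 2$ and your elimination of two distinct centers when $n=2$ go through. One residual caveat (shared with the paper, since both arguments rest on Proposition \ref{rho} as stated): your inequality $k_0\leq\rho(\bar{Y})-2$ is vacuous or false when $k_0=0$ and $\rho(\bar{Y})=1$, so minimal models of Picard number one are implicitly excluded in either approach.
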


\begin{proof}
Assume that $Y$ is not minimal. Let $E \subset Y$ be an irreducible $(-1)$ curve and let $Y'$ be the surface obtained by blowing-down $E$.

If $E$ does not intersect any of the nodal curves $C_i$, then $Y'$ contains $k$ disjoint nodal curves, hence $k = \rho(Y)-3 = \rho(Y')-2$. Therefore, by proposition \ref{rho}, $Y'$ is minimal.

Assume now that $E$ intersects, say $C_1$, i.e., $E \cdot C_1 = \alpha >0$. Then, if $C_1'$ denotes the image of $C_1$ in $Y'$, $C_1'$ is irreducible and
$$
(C_1')^2 = -2 + \alpha^2, \ \ C_1' \cdot K_{Y'} = - \alpha.
$$
Suppose $\alpha \geq 2$, then $(C_1')^2 >0$ and the image $C_1''$ in $\bar{Y}$ is a curve and satisfies $C_1''\cdot K_{\bar{Y}} \leq C_1' \cdot K_{Y'} < 0$. This contradicts $K_{\bar{Y}}$ nef.

This implies that $\alpha = 1$, i.e., $C_1'$ is a $(-1)$-curve. Moreover, $E \cdot C_i=0$ for $i \neq 1$, since otherwise we would have on $Y'$ two intersecting $(-1)$-curves, which is not possible on a surface with $\kappa(Y) \geq 0$. 
Denote by $Y''$ the surface obtained from $Y'$ by blowing down $C_1'$. Then $Y''$ contains $k-1$ nodal curves and 
$$
k-1 = \rho(Y)-4 = \rho(Y'')-2.
$$
By proposition \ref{rho} we get that $Y''$ is minimal.

\end{proof}

Now we are ready to prove our main result.
\begin{proof}[Proof of theorem \ref{main}.]
We will in fact show the following more general result
\begin{prop}\label{quotients}
Let $S$ be a minimal surface of general type with $p_g=0$ and $K_S^2 = 7$. Let $\sigma$ be an involution on $S$ with $k$ isolated fixed points. If $\kappa(S/ \langle \sigma \rangle) = 2$, then $k= 5$ or $k = 7$.
\end{prop}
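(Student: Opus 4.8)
The plan is to assume $\kappa(S/\langle\sigma\rangle)=2$ and to read off $k\in\{5,7\}$ from the numerology of the resolved quotient, exploiting that a surface with $p_g=0$, $K_S^2=7$ has the smallest possible second Betti number. First I would assemble the invariants. As $S$ is minimal of general type with $p_g(S)=0$ we have $q(S)=0$, $\chi(\hol_S)=1$, and therefore $b_2(S)=\big(12\chi(\hol_S)-K_S^2\big)-2=3$; since $p_g(S)=0$ makes every integral cohomology class of type $(1,1)$, hence algebraic, this gives $\rho(S)=b_2(S)=3$. Writing $\hat T$ for the resolution of $T=S/\langle\sigma\rangle$ in diagram \ref{diagram}, pullback of holomorphic forms and of canonical sections shows $q(\hat T)=p_g(\hat T)=0$, so $\chi(\hol_{\hat T})=1$ and $\rho(\hat T)=b_2(\hat T)=10-K_{\hat T}^2$. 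The hypothesis $\kappa(S/\langle\sigma\rangle)=2$ says $\hat T$ is of general type, and the $k$ nodes of $T$ give $k$ disjoint $(-2)$-curves on $\hat T$, so Proposition \ref{rho} is available.

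Before the main step I would fix the parity and size of $k$. Since $\hat S\to S$ is the blow-up at the $k$ isolated fixed points, $K_{\hat S}^2=7-k$, while $K_{\hat S}=\hat\pi^*(K_{\hat T}+\hat\delta)$ forces $K_{\hat S}^2=2(K_{\hat T}+\hat\delta)^2$ to be even; hence $k$ is odd. Lemma \ref{formulas} gives $k=K_S\cdot R+4\geq 4$ (as $K_S$ is nef), so $k\geq 5$.

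The decisive input is the trace $t$ of $\sigma^*$ on $H^2(S,\CC)$. Because $b_2(S)=3$, the eigenvalue count forces $t\in\{-3,-1,1,3\}$. Lemma \ref{formulas} supplies $R^2=2-t$ and the Picard relation $\rho(S)+t=2\rho(\hat T)-2k$, i.e.\ $\rho(\hat T)=k+(3+t)/2$; combined with Proposition \ref{rho}(i), $k\leq\rho(\hat T)-2$, this yields $t\geq 1$, so only $t=1$ and $t=3$ occur. If $t=1$ then $k=\rho(\hat T)-2$, so $\hat T$ is minimal by Proposition \ref{rho}(ii) and $K_{\hat T}^2=8-k\geq 1$, whence $k\leq 7$. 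If $t=3$ then $k=\rho(\hat T)-3$, so Lemma \ref{k=r-3} bounds the number $m\leq 2$ of contractions to the minimal model $\bar T$; then $K_{\bar T}^2=K_{\hat T}^2+m=(7-k)+m\geq 1$ gives $k\leq 8$. As $k$ is odd with $5\leq k\leq 7$ in both cases, $k\in\{5,7\}$.

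The heart of the argument --- and the reason Lemma \ref{k=r-3} was prepared in advance --- is the observation that $b_2(S)=3$ pins the trace to an odd value $\geq 1$, collapsing everything to the two cases $\rho(\hat T)-k\in\{2,3\}$ that Proposition \ref{rho}(ii) and Lemma \ref{k=r-3} handle exactly. I expect the only delicate point to be the case $t=3$: one must be sure that the $(-1)$-curves contracted toward $\bar T$ meet the $(-2)$-curves precisely in the patterns allowed by Lemma \ref{k=r-3}, so that $m\leq 2$ and the rise in $K^2$ stays controlled, rather than permitting a longer chain of blow-downs.
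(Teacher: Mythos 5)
Your proof is correct, and it follows the same overall strategy as the paper's: classify by the trace $t$ of $\sigma^*$ on $H^2(S,\CC)$, feed the relations of Lemma \ref{formulas} into the Picard-number identity, and then invoke Proposition \ref{rho} (when $\rho(\hat{T})-k=2$) or Lemma \ref{k=r-3} (when $\rho(\hat{T})-k=3$) together with Noether's formula to bound $k$. You diverge in two local steps, both in a way that slightly streamlines the argument. First, the paper handles the negative traces separately: $t=-3$ is excluded because $K_S$ is a nonzero invariant class, and $t=-1$ by an ad hoc numerical argument (the invariant part of $H^2(S,\CC)$ is then one-dimensional, forcing $R$ to be numerically proportional to $K_S$, which is incompatible with $R^2=2-t=3$ and $K_S^2=7$). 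You instead get $t\geq 1$ uniformly from $\rho(\hat{T})=k+(3+t)/2$ combined with Proposition \ref{rho}(i), $k\leq\rho(\hat{T})-2$ --- one line, no case distinction, and it needs only the hypothesis $\kappa(\hat{T})\geq 0$, which your standing assumption supplies. Second, your parity argument (that $7-k=K_{\hat{S}}^2=2\bigl(K_{\hat{T}}+\hat{\delta}\bigr)^2$ is even, so $k$ is odd) is global, whereas the paper extracts oddness of $k$ case by case from the oddness of $R^2=2-t$ via adjunction, which makes $K_S\cdot R$ odd in both surviving cases. Both variants are sound; note only that your parity step uses that $\hat{\delta}$ is an integral divisor class, which the decomposition $\hat{\pi}_*\hol_{\hat{S}}=\hol_{\hat{T}}\oplus\hol_{\hat{T}}(-\hat{\delta})$ guarantees. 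Finally, the worry in your last paragraph about the $t=3$ case is unnecessary: the bound of at most two blow-downs, with controlled behaviour of the $(-1)$-curves relative to the nodal curves, is precisely the content (and proof) of Lemma \ref{k=r-3}, so citing it settles that case.
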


\begin{proof}[Proof of prop. \ref{quotients}]
Since $S$ is a regular surface with $p_g=q=0$, $\rho(S) = e(S) - 2$. Therefore, $K_S^2 =7$ implies $\rho(S) = 3$. 

Since the class of the canonical divisor in $H^2(S,\CC)$ is invariant under $\sigma$ we have for $t$ the possibilities $t=3,1$ or $-1$.

Assume that $t=-1$, i.e. $\sigma|H^2(S, \CC)$ has eigenvalues $1,-1,-1$ and in particular, $\dim H^2(S, \CC)^{inv} = 1$. This implies that  $K_S $ is numerically equivalent to $rR$, which contradicts $R^2 = 2-t = 3$. Therefore this case does not occur.

\noindent
\underline{$t=1$}: then $R^2 =1$ and $K_S \cdot R = 2m+1$, for some $m \geq 0$. This implies:
$$
k= K_S \cdot R + 4 = 5 + 2m \geq 5.
$$
On the other hand, by lemma \ref{formulas},  
$$4 = \rho(S) + 1 = 2 \rho(\hat{T}) - 2k,$$ 
whence $\rho(\hat{T}) =k+2$.

Since $\kappa(S/\langle \sigma \rangle) = 2$, it follows by prop. \ref{rho} that $\hat{T}$ is minimal. In particular, $K_{\hat{T}}^2 > 0$. Therefore,
$$
K_{\hat{T}}^2 = 12 - e(S) = 8 - k > 0.
$$
This implies that $k = 5$ or $k = 7$.

\noindent
\underline{$t=3$}: in this case $R^2 = -1$, whence again $K_S \cdot R = 2m+1$, for some $m \geq 0$. This implies:
$$
k= K_S \cdot R + 4 = 5 + 2m \geq 5.
$$
On the other hand, by lemma \ref{formulas},   $\rho(\hat{T}) =k+3$.

Since $\kappa(S/\langle \sigma \rangle) = 2$, it follows by lemma. \ref{k=r-3} that  $K_{\hat{T}}^2 \geq -1$. Therefore,
$$
K_{\hat{T}}^2 = 12 - e(S) = 7 - k \geq -1.
$$
This immediately implies that $k= 5$ or $k=7$.
\end{proof}
Combining proposition \ref{quotients} with proposition \ref{kforinoue} we see that for an Inoue surface we have $k(S/\langle \gamma_i \rangle) \leq 1$. By  corollary \ref{criterion} our main result is proven.
\end{proof}

\begin{rem}
We were kindly informed that Proposition \ref{quotients} follows also from \cite{yongnam}, cf. e.g. the table on page 2, loc. cit.. 
\end{rem}


\bigskip
\noindent {\bf Author's Adress:}

\noindent I.Bauer \\
Mathematisches Institut der Universit\"at Bayreuth\\ NW II\\
Universit\"atsstr. 30\\ 95447 Bayreuth

\begin{verbatim}
ingrid.bauer@uni-bayreuth.de,
\end{verbatim}
\end{document}